\DeclareMathAlphabet{\mathcal}{OMS}{cmsy}{m}{n} 
\newcommand{\vertbar}{\>|\>}
\newcommand{\set}[2]{\ensuremath{\{ #1 \vertbar #2 \}}}
\newcommand{\modulus}[1]{\ensuremath{|\, #1 \,|}}
\DeclareMathOperator{\Con}{Con}
\DeclareMathOperator{\Ker}{Ker}
\newtheorem{theorem}{Theorem}
\newtheorem*{corollary}{Corollary}
\theoremstyle{definition}
\begin{document}

\title{On the utility of Robinson--Amitsur ultrafilters. II}
\author{Pasha Zusmanovich}
\address{Department of Mathematics, University of Ostrava, Ostrava, Czech Republic
}
\email{pasha.zusmanovich@osu.cz} 
\date{last revised August 14, 2016}
\thanks{J. Algebra, to appear; arXiv:1508.07496}

\begin{abstract}
We extend the result of the previous paper under the same title about embedding
of ideal-determined algebraic systems into ultraproducts, to arbitrary algebraic
systems, and to ultraproducts over $\kappa$-complete ultrafilters. We also 
discuss the scope of applicability of this result, and correct a mistake from 
the previous paper concerning homomorphisms from ultraproducts.
\end{abstract}

\maketitle

\subsection{Introduction}
In \cite{ultra}, an old and simple trick, used by A. Robinson and S. Amitsur
(see \cite[Proof of Theorem 15]{amitsur-1} and \cite[Theorem 3]{amitsur-2}) to 
establish, in the context of ring theory, an embedding of certain rings in ultraproducts, was generalized to algebraic systems with ideal-determined 
congruences. Here we push it to the full generality, for arbitrary algebraic 
systems without any restrictions on their congruences.

\subsection{Recollection on congruences, ultrafilters, and ultraproducts}
We refer to \cite{bergman-c} and \cite{chang-keisler} for the rudiments of
universal algebra and model theory we need.

Let us recall some basic notions and fix notation. We consider the most general
algebraic systems, i.e. sets with a number of operations $\Omega$ defined on 
them, of, generally, various arity. The signature $\Omega$ is arbitrary, but 
fixed (so, in what follows all algebraic systems are supposed to be of signature
$\Omega$). The set of all congruences on a given algebraic system $A$ forms a 
partially ordered set (actually, a lattice), and the minimal element in this set
is the \emph{trivial congruence}, coinciding with the diagonal 
$\set{(a,a)}{a\in A}$ in $A \times A$. The intersection of all congruences 
containing a given relation $\rho$, i.e. a set of pairs of elements from 
$A \times A$, is called a \emph{congruence generated by $\rho$} and is denoted 
by $\Con(\rho)$. In the case of one-element relation, i.e. a single pair 
$(a,b) \in A \times A$ with $a\ne b$, we shorten this notation to $\Con(a,b)$ 
and speak about \emph{principal congruences}.

Given a cardinal $\kappa > 2$, let us call a set $\mathscr S$ of sets 
\emph{$\kappa$-complete}, if the intersection of any nonempty set of fewer than
$\kappa$ elements of $\mathscr S$ belongs to $\mathscr S$. If a set $\mathscr S$
of subsets of a set $\mathbb I$ satisfies a weaker condition -- that the 
intersection of any nonempty set of fewer than $\kappa$ elements of $\mathscr S$
contains an element of $\mathscr S$ -- then the set of subsets of $\mathbb I$ 
which are oversets of all such intersections is a $\kappa$-complete filter on 
$\mathbb I$ containing $\mathscr S$ (this is an obvious generalization of the 
standard and frequently employed fact that any set of subsets satisfying the 
finite intersection property can be extended to a filter).

An algebraic system $A$ is called \emph{$\kappa$-subdirectly irreducible}, if 
either of the following two equivalent conditions is satisfied:
\begin{itemize}
\item 
The set of nontrivial congruences of $A$ is $\kappa$-complete.
\item
If $A$ embeds in the direct product of $< \kappa$ algebraic systems 
$\prod_{i\in \mathbb I} B_i$, $\modulus{\mathbb I} < \kappa$, then $A$ embeds in
one of $B_i$'s. 
\end{itemize}
Obviously, the condition of $\omega$-subdirect irreducibility, also called
\emph{finite subdirect irreducibility}, is equivalent to $n$-subdirect 
irreducibility for any finite $n>2$. Note that $\omega$-complete filters 
(ultrafilters) are just the usual filters (ultrafilters).

Given a filter (respectively, ultrafilter) $\mathscr F$ on a set $\mathbb I$,
the quotient of the direct product of algebraic systems 
$\prod_{i\in \mathbb I} A_i$ by the congruence 
$$
\theta_{\mathscr F} = 
\set{(a,b) \in (\prod_{i\in \mathbb I} A_i) \times (\prod_{i\in \mathbb I} A_i)}{\set{i\in \mathbb I}{a(i) = b(i)} \in \mathscr F}
$$
is called \emph{filtered product} (respectively, \emph{ultraproduct}) of the 
corresponding algebraic systems, and is denoted by $\prod_{\mathscr F} A_i$.

As any $\kappa$-complete ultrafilter on a set of cardinality $< \kappa$ is 
principal, the condition of $\kappa$-subdirect irreducibility of an algebraic 
system $A$ may be trivially reformulated as follows: 
\begin{itemize}
\item
If $A$ embeds in the direct product of $< \kappa$ algebraic systems 
$\prod_{i\in \mathbb I} B_i$, $\modulus{\mathbb I} < \kappa$, then $A$ embeds in
the ultraproduct $\prod_{\mathscr U} B_i$ for some ultrafilter $\mathscr U$ 
on $\mathbb I$.
\end{itemize}
What is, perhaps, surprising, is that under a suitable set-theoretic assumption
on the cardinal $\kappa$, the latter condition is equivalent to the same 
condition with an arbitrary index set $\mathbb I$, without a restriction on its cardinality. This equivalence is 
what constitutes the ``Robinson--Amitsur theorem'' (Theorem \ref{th-rob-am} 
below).

\subsection{Embeddings in direct products}\label{ss-th}

The next proof follows the proof of Theorem 1.1 from \cite{ultra}, with ideals 
being replaced by congruences, and $\omega$ being replaced by an arbitrary 
cardinal $\kappa$. On this level of generality, the proof is even simpler than
an already simple proof from \cite{ultra}.

\begin{theorem}[\sc Robinson--Amitsur]\label{th-rob-am}
Let $\kappa$ be a cardinal $>2$ and such that any $\kappa$-complete filter can 
be extended to a $\kappa$-complete ultrafilter. Then for any algebraic system 
$A$ the following are equivalent:
\begin{enumerate}
\item $A$ is $\kappa$-subdirectly irreducible.
\item
For any embedding $f$ of $A$ in the direct product $\prod_{i\in \mathbb I} B_i$
of a set of algebraic systems $\{B_i\}_{i\in \mathbb I}$, there is a 
$\kappa$-complete ultrafilter $\mathscr U$ on the set $\mathbb I$ such that the
composition of $f$ with the canonical homomorphism 
$\prod_{i\in \mathbb I} B_i \to \prod_{\mathscr U} B_i$, is an embedding.
\end{enumerate}
\end{theorem}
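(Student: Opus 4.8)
The plan is to treat the two implications separately, with essentially all of the content residing in $(1) \Rightarrow (2)$. For $(2) \Rightarrow (1)$ I would simply specialize $(2)$ to products indexed by a set $\mathbb I$ with $\modulus{\mathbb I} < \kappa$: condition $(2)$ produces a $\kappa$-complete ultrafilter $\mathscr U$ on such an $\mathbb I$, but every $\kappa$-complete ultrafilter on a set of cardinality $< \kappa$ is principal, so $\prod_{\mathscr U} B_i$ is isomorphic to a single factor $B_{i_0}$ and $A$ embeds into it. This is precisely the reformulated characterization of $\kappa$-subdirect irreducibility recorded above, so $(1)$ follows (and this direction does not use the hypothesis on $\kappa$).

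For the substantial direction $(1) \Rightarrow (2)$, fix an embedding $f \colon A \to \prod_{i\in\mathbb I} B_i$ and set $f_i = \pi_i \circ f$ and $\theta_i = \Ker(f_i)$, a congruence on $A$; since $f$ is an embedding, $\bigcap_{i\in\mathbb I}\theta_i$ is trivial. For any ultrafilter $\mathscr U$ on $\mathbb I$, unwinding the definition of $\theta_{\mathscr U}$ shows that the composition $g$ of $f$ with the canonical map $\prod_{i\in\mathbb I} B_i \to \prod_{\mathscr U} B_i$ satisfies $(a,b) \in \Ker(g)$ exactly when $\set{i\in\mathbb I}{(a,b)\in\theta_i} \in \mathscr U$. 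Hence $g$ is injective --- and therefore an embedding, the signature consisting of operations only --- if and only if, for every pair $a \ne b$ in $A$, the complementary set $S_{a,b} = \set{i\in\mathbb I}{(a,b)\notin\theta_i}$ lies in $\mathscr U$. The triviality of $\bigcap_{i}\theta_i$ guarantees each $S_{a,b}$ is nonempty, so the whole problem reduces to producing a $\kappa$-complete ultrafilter containing the family $\mathscr S = \set{S_{a,b}}{a,b\in A,\ a\ne b}$ (the case $\modulus A \le 1$ being vacuous).

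By the extension fact recalled above together with the hypothesis on $\kappa$, it suffices to verify that $\mathscr S$ satisfies the generalized finite intersection property: the intersection of any fewer than $\kappa$ members of $\mathscr S$ contains a member of $\mathscr S$. This is the crux, and it is exactly where $\kappa$-subdirect irreducibility enters. Given fewer than $\kappa$ pairs $(a_j,b_j)$ with $a_j \ne b_j$, the principal congruences $\Con(a_j,b_j)$ are nontrivial, so by $\kappa$-subdirect irreducibility their intersection is again nontrivial and contains some pair $(a,b)$ with $a \ne b$. I would then show $S_{a,b} \subseteq \bigcap_j S_{a_j,b_j}$: if $(a_j,b_j) \in \theta_i$ for some $j$, then $\Con(a_j,b_j) \subseteq \theta_i$ because $\theta_i$ is a congruence, whence $(a,b) \in \theta_i$; contrapositively, $i \in S_{a,b}$ forces $i \in S_{a_j,b_j}$ for all $j$.

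Thus $\mathscr S$ extends first to a $\kappa$-complete filter and then, by hypothesis, to a $\kappa$-complete ultrafilter $\mathscr U \supseteq \mathscr S$; since every $S_{a,b} \in \mathscr U$, the map $g$ is an embedding. The main obstacle is conceptual rather than computational: recognizing that the separation of each pair $(a,b)$ corresponds to membership of $S_{a,b}$ in $\mathscr U$, and that the passage from ``fewer than $\kappa$ members'' to ``a single member'' of $\mathscr S$ is powered by intersecting the corresponding principal congruences. Once the inclusion $S_{a,b}\subseteq\bigcap_j S_{a_j,b_j}$ is in hand, everything else is the two recalled set-theoretic facts plus a routine unwinding of definitions.
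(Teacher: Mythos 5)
Your proof is correct and follows essentially the same route as the paper's: the same family $\mathscr S$ of sets $S_{a,b}$, the same key step showing that an intersection of fewer than $\kappa$ members of $\mathscr S$ contains a member (obtained by picking a nontrivial pair in $\bigcap_j \Con(a_j,b_j)$, nontrivial by $\kappa$-subdirect irreducibility), and the same extension of $\mathscr S$ to a $\kappa$-complete ultrafilter. The only difference is presentational: you phrase everything via the kernels $\theta_i = \Ker(\pi_i \circ f)$ on $A$ itself, where the paper identifies $A$ with its image and invokes the Third Isomorphism Theorem on $\Ker(\Pr_{i_0}) \cap (A \times A)$ --- the underlying argument is identical, and your explicit check that each $S_{a,b}$ is nonempty is a small point of added care.
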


\begin{proof}
(ii) $\Rightarrow$ (i): 
take $\modulus{\mathbb I} < \kappa$ and observe, as above, that then the 
ultrafilter $\mathscr U$ is principal.

(i) $\Rightarrow$ (ii). For any two elements $a,b \in A$, define 
$\mathbb S_{a,b} = \set{i\in \mathbb I}{a(i) \ne b(i)}$, and let 
$\mathscr S = \set{\mathbb S_{a,b}}{a,b \in A, a \ne b}$. Let us verify that the
intersection of $< \kappa$ elements of $\mathscr S$ contains an element of 
$\mathscr S$.

Let $a,b$ be two elements of $A$ such that $a\ne b$. If for some 
$i_0\in \mathbb I$ we have $a(i_0) = b(i_0)$, then $(a,b)$ belongs to the 
congruence of $\prod_{i\in \mathbb I} B_i$ which is the kernel of the canonical
projection $\Pr_{i_0}$ to $B_{i_0}$, and by the Third Isomorphism Theorem, 
belongs to the congruence $\Ker(\Pr_{i_0}) \cap (A \times A)$ of $A$, whence 
$\Con(a,b) \subseteq \Ker(\Pr_{i_0})$. The latter means that for any 
$(c,d) \in \Con(a,b)$, we have $\mathbb S_{c,d} \subseteq \mathbb S_{a,b}$.

Due to $\kappa$-subdirect irreducibility of $A$, for any set of pairs of 
different elements $\{a_i, b_i\}$ of $A$ of cardinality $< \kappa$, the 
intersection $\bigcap_{i<\kappa} \Con(a_i,b_i)$ is a nontrivial congruence. Pick
an element $(c,d)$, $c\ne d$, from this intersection. Then by just proved we 
have $\mathbb S_{c,d} \subseteq \bigcap_{i<\kappa} \mathbb S_{a_i,b_i}$, as 
required.

Thus $\mathscr S$ can be extended to a $\kappa$-complete filter, and then to a
$\kappa$-complete ultrafilter $\mathscr U$ on $\mathbb I$. Factoring the 
embedding of $A$ in $\prod_{i\in \mathbb I} B_i$ by the congruence 
$\theta_\mathscr U$, we get, again by the Third Isomorphism Theorem, an 
embedding of the quotient $A / (\theta_{\mathscr U} \cap (A \times A))$ in the 
ultraproduct $\prod_{\mathscr U} B_i$. But if 
$(a,b) \in \theta_{\mathscr U} \cap (A \times A)$, then, by definition, 
$\set{i\in \mathbb I}{a(i) = b(i)} \in \mathscr U$, and, since $\mathscr U$ is
an ultrafilter, $\set{i\in \mathbb I}{a(i) \ne b(i)} \notin \mathscr U$, and
hence $\set{i\in \mathbb I}{a(i) \ne b(i)} \notin \mathscr S$, what, in its 
turn, implies $a=b$. This shows that the congruence 
$\theta_{\mathscr U} \cap (A \times A)$ is trivial, and hence $A$ embeds in
$\prod_{\mathscr U} B_i$.
\end{proof}

Coupling the case $\kappa = \omega$ of this theorem with the Birkhoff theorem 
about varieties of algebraic systems, and some rudimentary model theory, we get:

\begin{corollary}[\sc Criterion for absence of nontrivial identities]
Let $\mathfrak A$ be a variety of algebraic systems such that any free system
in $\mathfrak A$ is finitely subdirectly irreducible. Then for an algebraic 
system $A \in \mathfrak A$ the following three conditions are equivalent:
\begin{enumerate}
\item $A$ does not satisfy a nontrivial identity within $\mathfrak A$.
\item Any free system in $\mathfrak A$ embeds in an ultrapower of $A$.
\item Any free system in $\mathfrak A$ embeds in a filtered power of $A$.
\end{enumerate}
\end{corollary}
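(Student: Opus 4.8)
The plan is to establish the cycle of implications (i) $\Rightarrow$ (ii) $\Rightarrow$ (iii) $\Rightarrow$ (i), with essentially all of the work concentrated in (i) $\Rightarrow$ (ii), where Theorem \ref{th-rob-am} is invoked with $\kappa = \omega$. I first record that for $\kappa = \omega$ the set-theoretic hypothesis of that theorem holds unconditionally: as noted in the text, $\omega$-complete filters are just the ordinary filters, and every filter extends to an ultrafilter by the ultrafilter lemma. Since the free systems of $\mathfrak A$ are finitely subdirectly irreducible by assumption, they are exactly the $\omega$-subdirectly irreducible systems to which Theorem \ref{th-rob-am} applies.

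For (i) $\Rightarrow$ (ii), let $F$ be a free system of $\mathfrak A$ on a generating set $X$. Because $F$ is free, homomorphisms $F \to A$ correspond bijectively to maps $X \to A$; let $\mathbb I$ be the set of all such homomorphisms and consider the evaluation map $\phi \colon F \to A^{\mathbb I}$ given by $\phi(u) = (h(u))_{h \in \mathbb I}$ into the direct power $A^{\mathbb I}$. I would then compute its kernel: $\phi(s) = \phi(t)$ holds precisely when $h(s) = h(t)$ for every assignment of the generators into $A$, that is, when the identity $s = t$ holds in $A$. By hypothesis (i), every identity valid in $A$ is already valid throughout $\mathfrak A$, and hence $s = t$ as elements of $F$, by the Birkhoff description of free systems (an identity holds throughout $\mathfrak A$ if and only if its two sides coincide in the free system). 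Thus $\phi$ is an embedding of $F$ into the direct power $A^{\mathbb I}$. Applying Theorem \ref{th-rob-am} with $\kappa = \omega$, the composition of $\phi$ with the canonical homomorphism onto some ultraproduct $\prod_{\mathscr U} A$ is again an embedding; since all factors equal $A$, this ultraproduct is an ultrapower of $A$, which yields (ii).

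The implication (ii) $\Rightarrow$ (iii) is immediate, since an ultrafilter is in particular a filter, so every ultrapower is a filtered power. For (iii) $\Rightarrow$ (i), I argue contrapositively, using that identities are preserved by filtered powers and by subsystems. Suppose $A$ satisfies an identity $s = t$ nontrivial within $\mathfrak A$, so that $s \ne t$ in the free system $F$ on a set $X$ containing the variables involved. Any filtered power $\prod_{\mathscr F} A$ again satisfies $s = t$: representing elements coordinatewise, both sides agree at every index because $A$ satisfies $s = t$, hence they agree on the whole index set, which lies in $\mathscr F$. The same identity then holds in every subsystem of such a filtered power. By (iii), $F$ embeds into a filtered power of $A$, so $s = t$ in $F$, contradicting $s \ne t$. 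Hence (i) holds, and the cycle closes.

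I expect the step (i) $\Rightarrow$ (ii) to be the main obstacle, specifically the identification of the kernel of the evaluation map $\phi$ with the set of identities valid in $A$ and the ensuing appeal to the Birkhoff correspondence; the hypothesis that free systems are finitely subdirectly irreducible serves precisely to license the application of Theorem \ref{th-rob-am}. The remaining two implications are routine once preservation of identities under reduced powers is invoked.
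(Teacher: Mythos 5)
Your proposal is correct and takes essentially the same route as the paper: the same cycle (i) $\Rightarrow$ (ii) $\Rightarrow$ (iii) $\Rightarrow$ (i), with (i) $\Rightarrow$ (ii) obtained by embedding the free system into a direct power of $A$ (your evaluation map is precisely the standard proof of the Birkhoff-theorem step the paper cites) and then applying Theorem~\ref{th-rob-am} with $\kappa=\omega$. The only difference is one of presentation: where the paper cites preservation of Horn sentences under filtered products for (iii) $\Rightarrow$ (i), you verify the required special case (preservation of identities under filtered powers and subsystems) directly by a coordinatewise computation.
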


\begin{proof}
(i) $\Rightarrow$ (ii) is established exactly as in 
\cite[Corollaries 1.2 and 1.3]{ultra}: by the Birkhoff theorem, a free system in
$\mathfrak A$ embeds in the direct power of $A$. Apply Theorem \ref{th-rob-am},
with $\kappa = \omega$.

(ii) $\Rightarrow$ (iii): obvious.

(iii) $\Rightarrow$ (i): follows from the fact that any nontrivial identity is a
first-order Horn sentence, and Horn sentences are preserved under filtered 
products (see, e.g., \cite[Proposition 6.2.2]{chang-keisler}).
\end{proof}

\subsection{No way to move beyond $\omega$. Discussion}

Let us make a few remarks about the scope of applicability of 
Theorem \ref{th-rob-am} and Corollary. By the {\L}o\'s theorem, an algebraic 
system is elementarily equivalent to its ultrapower, so if the condition of 
Corollary is met, it allows to reduce the question about the absence of 
nontrivial identities in an algebraic system $A$ to the question whether a 
system elementary equivalent to $A$ contains a free subsystem. This was 
demonstrated to be useful in some situations in \cite{ultra}, but the 
usefulness is severely restricted by the fact that any property of an algebraic
system $A$ we can use in the process should be a first-order property. Most of 
the interesting properties of algebraic systems are second- or higher-order. 

Note that by the analog of {\L}o\'s theorem for ultraproducts over 
$\kappa$-complete ultrafilters, the ultrapower $A^{\mathscr U}$ of an algebraic
system $A$ is elementarily equivalent to $A$ in the sense of certain 
higher-order logic (denoted by $\mathbf{L}_\kappa$ in 
\cite[\S 4.2]{chang-keisler}, and by $\mathbf{L}_{\kappa\kappa}$ in 
\cite[\S 3.3]{dickmann}). But unfortunately, we cannot derive an analog of 
Corollary to Theorem \ref{th-rob-am} for $\kappa > \omega$, at least in, 
arguably, the most interesting for applications cases -- groups and algebras 
over fields (the main protagonists of \cite{ultra}). The reason is that free 
systems in these varieties are residually nilpotent (the intersection of all 
terms of the lower central series is trivial) and hence are already not 
$\omega_1$-subdirectly irreducible. 

Moreover, it seems that no statement similar to Corollary, where the (usual) 
ultrapower construction is replaced by whatever other construction providing us
with elementary equivalence in the sense of second- or higher-order logic of 
some sort, is possible. Indeed, any such logic \textbf{L} should be a priori 
weak enough not to allow encode the fact that an algebraic system contains a 
free subsystem: otherwise, if in the condition (ii) of Corollary we would be 
able to replace the ultrapower of $A$ by another construction elementary equivalent to 
$A$ in the sense of \textbf{L}, this would imply that for any algebraic system 
the absence of a nontrivial identity is equivalent to having a free subsystem --
an obviously false statement. While, in general, free systems in arbitrary 
varieties are not necessary countable, they are, essentially, so in all cases of
interest (groups and algebras over fields): all identities in a given variety 
are captured by free systems of no more than countable rank; such free groups 
are, obviously, countable, and such free algebras are countable-dimensional over
the base field. In the latter case, as identities of algebras do not change 
under field extensions (except, possibly, the cases of finite base fields of 
``small'' cardinality), we may always assume the base field to be countable, 
and, hence, the whole free algebra to be countable. To summarize: the 
hypothetical logic \textbf{L} should not allow to encode the fact that an 
algebraic system contains a countable subsystem -- a very weak logic indeed, on
the verge not to be qualified as the ``second-order'' one!

What about the condition on $\kappa$ in Theorem \ref{th-rob-am} (sometimes 
called \emph{strong compactness})? In the case $\kappa = \omega$ this amounts
to the well-known fact that any filter can be extended to an ultrafilter (true 
by the Zorn lemma). In the general case we are in deep waters of set theory. It
seems that this is one of the statements about which one does not make sense 
to speak if they are ``true'' or ``false'', but it is rather a matter of which 
model of set theory is adopted. See \cite[\S 3.3, Parts C,D]{dickmann} for this
and many other conditions of such sort, and relationships between them. Note 
also that the very existence of non-principal $\kappa$-complete ultrafilters for
$\kappa > \omega$, being equivalent to the existence of measurable cardinals, 
entails rather strange properties of the corresponding model(s) of set theory 
(see, again, \cite[\S 4.2]{chang-keisler} and \cite[\S 0.4]{dickmann}).

Note finally that it would be very interesting to extend arguments used in the
proof of Theorem \ref{th-rob-am} to the case of metric ultraproducts. This could
provide an approach to various questions related to sofic and hyperlinear 
groups.

\subsection{Semigroups}

Arguably, the most interesting class of algebraic systems whose congruences are
not ide\-al-de\-ter\-mi\-ned, i.e. not covered by \cite{ultra}, but by 
Theorem \ref{th-rob-am} and Corollary above, is the class of semigroups. 
Unfortunately, the Corollary is not applicable to the variety of all semigroups,
as free semigroups of rank $>1$ are not finitely subdirectly irreducible. For 
example, if $x$ and $y$ are among free generators of a free semigroup $G$, then
the congruences of $G$ generated by pairs $(x,x^2)$ and $(y,y^2)$ intersect 
trivially. It seems to be interesting to investigate for which classes of 
semigroups (inverse semigroups? Burnside varieties?) the Corollary would be 
applicable, and what good it could do. The same question for the class of loops.

\subsection{Homomorphisms from direct products}

Finally, we take the opportunity to note a mistake in \cite{ultra}. Theorem 7.1
there contains a statement dual, in a sense, to Theorem \ref{th-rob-am}: if 
there is a surjective homomorphism $\alpha: \prod_{i\in \mathbb I} B_i \to A$, 
where $A$ is finitely subdirectly irreducible, then $A$ is a surjective 
homomorphic image of an ultraproduct of the same family $\{B_i\}$. The proof of
this theorem is in error. At the top of p.~283 in the published version (at the
bottom of p.~13--top of p.~14 in the arXiv version), it is, essentially, claimed
(keeping the notation and terminology of the original paper) that any equality 
in $A$ of the form $\alpha(u) = t(b_1,\dots,b_n,\alpha(f),\dots,\alpha(f))$, 
where $u,f\in \prod_{i\in \mathbb I} B_i$, $b_1, \dots, b_n \in A$, and $t$ is 
an ideal term in arguments occupied by $\alpha(f)$'s, can be lifted to 
$\prod_{i\in \mathbb I} B_i$: $u = t(h_1,\dots,h_n,f,...,f)$ for some 
$h_i \in \prod_{i\in \mathbb I} B_i$ such that $\alpha(h_i) = b_i$. There is no
reason whatsoever for this to be true in general: for example, in the simplest 
possible case $t(x) = x$ (the ideal term just in one variable, without arguments
occupied by $b_i$'s), this claim amounts to saying that $\alpha$ is an 
isomorphism.

Luckily, we can slightly modify the statement, and supply it with a new proof. 
While the new statement differs from the original one, it is more general for 
the most classes of algebraic systems of interest. The proof uses an ingenious 
(and barely utilizing anything beyond the definition of ultrafilter) argument 
from an array of recent papers by G.~Bergman, some of them jointly with 
N.~Nahlus (e.g., \cite{bergman-g} and \cite{bergman}), devoted to factoring homomorphisms from 
direct product of groups, algebras over fields, and modules. The argument is valid for arbitrary 
algebraic  systems, and we reproduce it here, throwing in $\kappa$-complete 
ultrafilters for good measure.

\begin{theorem}[\sc Bergman--Nahlus]\label{th-berg}
For any cardinal $\kappa > 2$, and any algebraic system $A$ consisting of more 
than one element, the following are equivalent:
\begin{enumerate}
\item
For any surjective homomorphism $f$ of the direct product 
$\prod_{i\in \mathbb I} B_i$, $\modulus{\mathbb I} < \kappa$, to $A$, there is 
$i_0 \in \mathbb I$ such that $f$ factors through the canonical projection 
$\prod_{i\in \mathbb I} B_i \to B_{i_0}$.
\item
For any surjective homomorphism $f$ of the direct product 
$\prod_{i\in \mathbb I} B_i$ to $A$, there is a $\kappa$-complete ultrafilter 
$\mathscr U$ on the indexing set $\mathbb I$ such that $f$ factors through the 
canonical homomorphism $\prod_{i\in \mathbb I} B_i \to \prod_{\mathscr U} B_i$.
\end{enumerate}
\end{theorem}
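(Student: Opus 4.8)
The plan is to prove the equivalence of the two conditions in the Bergman--Nahlus theorem, with the substance concentrated in the direction (i) $\Rightarrow$ (ii), since (ii) $\Rightarrow$ (i) should follow the same cheap route as in Theorem \ref{th-rob-am}: when $\modulus{\mathbb I} < \kappa$, any $\kappa$-complete ultrafilter on $\mathbb I$ is principal, say concentrated at $i_0$, and factoring through $\prod_{\mathscr U} B_i$ is then the same as factoring through $B_{i_0}$. So I would dispose of that direction in a line and turn attention to the forward implication.

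For (i) $\Rightarrow$ (ii), the natural strategy is to produce, out of a surjective homomorphism $f\colon \prod_{i\in \mathbb I} B_i \to A$ on an \emph{arbitrary} index set, a family $\mathscr S$ of subsets of $\mathbb I$ with the $\kappa$-intersection property, so that the machinery from the recollection section extends it to a $\kappa$-complete filter and then (by the standing hypothesis-free framing of this theorem) I must be careful: unlike Theorem \ref{th-rob-am}, the present statement carries \emph{no} set-theoretic assumption on $\kappa$. This means I cannot simply invoke ``extend to a $\kappa$-complete ultrafilter''; instead the Bergman--Nahlus argument must manufacture the requisite ultrafilter directly. The key object to track is the following. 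For a subset $J \subseteq \mathbb I$, let $e_J \in \prod_{i\in\mathbb I} B_i$ act as an ``indicator'' that restricts/cuts a given element to coordinates in $J$; the point of Bergman's argument is that $f$ factors through $\prod_{\mathscr U}$ precisely when the congruence $\Ker f$ is controlled by a single ultrafilter's worth of coordinates. First I would define, for each pair $(u,v)$ with $f(u) = f(v)$, the agreement set $\set{i}{u(i) = v(i)}$, and study the collection $\mathscr F$ of subsets $J$ such that $f$ ``does not see'' the complement of $J$ — formally, such that two elements agreeing off $J$ are identified by $f$.

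The heart of the proof, and the step I expect to be the main obstacle, is showing that this collection $\mathscr F$ is an ultrafilter (not merely a filter), and that it is $\kappa$-complete. Ultra-ness is exactly where finite/$\kappa$-subdirect irreducibility of $A$ enters through condition (i): one splits $\mathbb I = J \sqcup (\mathbb I \setminus J)$, obtains $\prod_{i} B_i \cong (\prod_{i\in J} B_i)\times(\prod_{i\notin J} B_i)$, and the factoring hypothesis (i) applied to this \emph{two-factor} product forces $f$ to factor through one of the two sides — which says precisely that either $J$ or its complement lies in $\mathscr F$, the defining property of an ultrafilter. The delicate points are: first, that this two-factor application of (i) is legitimate even when the original $\mathbb I$ is large (here one uses that (i) is stated for products of $<\kappa$ factors and $2 < \kappa$, so a binary product is always covered); second, that closure under intersections of $<\kappa$ members holds, which should come from iterating the same splitting argument together with the hypothesis that $A$ is more than one element (to rule out the degenerate empty-set-in-filter case).

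Once $\mathscr U := \mathscr F$ is known to be a $\kappa$-complete ultrafilter, I would finish by checking that $f$ annihilates $\theta_{\mathscr U}$, i.e. that $\Ker f \supseteq \theta_{\mathscr U} \cap (\text{everything})$, so that $f$ descends through the canonical homomorphism $\prod_{i\in\mathbb I} B_i \to \prod_{\mathscr U} B_i$. Concretely, if $u(i) = v(i)$ on a set belonging to $\mathscr U = \mathscr F$, then by the very definition of $\mathscr F$ the elements $u,v$ have the same image under $f$, giving the required factorization; surjectivity of the induced map is inherited from surjectivity of $f$. I would close by remarking that the entire argument, following Bergman, uses nothing beyond the definition of ultrafilter and the splitting of a product into two factors, which is exactly why it transplants verbatim to arbitrary algebraic systems and to $\kappa$-complete ultrafilters without any compactness-type assumption on $\kappa$.
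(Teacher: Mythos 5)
You follow the same route as the paper---the same family $\mathscr F$ (the paper's $\mathscr U$) of subsets $\mathbb J\subseteq\mathbb I$ such that $f$ factors through the projection onto $\prod_{i\in\mathbb J}B_i$, the same binary splitting $\prod_{i\in\mathbb I}B_i\cong\prod_{i\in\mathbb J}B_i\times\prod_{i\in\mathbb I\setminus\mathbb J}B_i$ combined with condition (i) to get that $\mathbb J$ or its complement lies in $\mathscr F$, the same use of $\modulus{A}>1$ to keep $\emptyset\notin\mathscr F$, and the same final descent through $\theta_{\mathscr U}$---but the two steps you dismiss as ``iterating the same splitting argument'' are exactly where the substance lies, and iteration of the splitting yields neither of them. (A wording slip first: $\mathbb J\in\mathscr F$ should mean that elements agreeing \emph{on} $\mathbb J$ are identified by $f$; ``agreeing off $\mathbb J$'' says the opposite.) For finite intersections: upward closure, $\mathbb I\in\mathscr F$, $\emptyset\notin\mathscr F$, and the dichotomy ``$\mathbb J\in\mathscr F$ or $\mathbb I\setminus\mathbb J\in\mathscr F$'' together do \emph{not} make $\mathscr F$ a filter; on $\mathbb I=\{1,2,3\}$ the family $\{\{1\},\{1,2\},\{1,3\},\{2,3\},\{1,2,3\}\}$ has all four properties yet contains two disjoint sets. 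The missing ingredient is the patching lemma that the paper cites from \cite[Lemma 7 and Proposition 8]{bergman-g} and \cite[Lemma 1.2]{bergman}: if $f$ factors through the projections onto $\mathbb J$ and onto $\mathbb K$, and $u,v$ agree on $\mathbb J\cap\mathbb K$, define $w$ by $w(i)=u(i)$ for $i\in\mathbb J$ and $w(i)=v(i)$ otherwise; then $w$ agrees with $u$ on $\mathbb J$ and with $v$ on $\mathbb K$, so $f(u)=f(w)=f(v)$, which proves $\mathbb J\cap\mathbb K\in\mathscr F$. Without this hybrid-element construction there is no filter, hence no ultrafilter, and your argument stalls.

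The second gap is $\kappa$-completeness. Even once finite intersections are secured, closure under intersections of $<\kappa$ members for $\kappa>\omega$ cannot be reached by iterating anything: a transfinite iteration of the patching construction has nothing to say at limit stages, since $f$ in no way commutes with coordinatewise limits. The paper's argument is of a different kind and uses hypothesis (i) at full strength, not just for two factors: given pairwise disjoint sets $\mathbb I_j$, $j<\lambda$, with $\lambda<\kappa$, covering $\mathbb I$, regroup the product as $\prod_{i\in\mathbb I}B_i=\prod_{j<\lambda}\bigl(\prod_{i\in\mathbb I_j}B_i\bigr)$, a product of $\lambda<\kappa$ factors, and apply (i) to conclude that $f$ factors through $\prod_{i\in\mathbb I_{j_0}}B_i$ for some $j_0$, i.e. $\mathbb I_{j_0}\in\mathscr F$. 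An ultrafilter such that every partition of $\mathbb I$ into $<\kappa$ pieces has a piece inside it is $\kappa$-complete: if $A_\alpha\in\mathscr F$ for $\alpha<\lambda<\kappa$, partition $\mathbb I$ into the sets $P_\alpha$ of indices leaving $A_\alpha$ first at stage $\alpha$ together with $\bigcap_{\alpha<\lambda}A_\alpha$; each $P_\alpha$ is disjoint from $A_\alpha$, so the piece lying in $\mathscr F$ must be the intersection. This also corrects your closing remark: the argument does not rest on ``the splitting of a product into two factors'' alone; the $<\kappa$-fold regrouping is precisely why condition (i) is stated for all index sets of cardinality $<\kappa$ rather than merely for pairs.
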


\begin{proof}
(ii) $\Rightarrow$ (i) is obvious (like in a similar situation in 
Theorem \ref{th-rob-am}, take $\modulus{\mathbb I} < \kappa$ and observe that 
$\mathscr U$ is necessary principal).

(i) $\Rightarrow$ (ii).
Let $\mathscr U$ consist of subsets $\mathbb J \subseteq \mathbb I$ such that
$f$ factors through a surjective homomorphism from the (smaller) direct product
of $B_i$'s indexed over $\mathbb J$, i.e. the diagram
\begin{diagram}
\prod_{i\in \mathbb I} B_i & \rTo^f  & A   \\
\dTo                       & \ruTo         \\ 
\prod_{i\in \mathbb J} B_i &  
\end{diagram}
where the vertical arrow is the canonical projection, commutes. Utilizing the 
condition (i) with $\modulus{\mathbb I} = 2$, and reasoning as in 
\cite[Lemma 7 and Proposition 8]{bergman-g} or \cite[Lemma 1.2]{bergman}, we get
that $\mathscr U$ is an ultrafilter, and $f$ factors through the canonical 
homomorphism $\prod_{i\in \mathbb I} B_i \to \prod_{\mathscr U} B_i$.

To prove that $\mathscr U$ is $\kappa$-complete, it is sufficient to show that
for any decomposition $\mathbb I = \bigcup_{j < \kappa} \mathbb I_j$ into the 
union of pairwise disjoint sets $\mathbb I_j$, $j<\kappa$, at least one of them
belongs to $\mathscr U$. Since 
$\prod_{i\in \mathbb I} B_i = 
\prod_{j<\kappa} \Big(\prod_{i\in \mathbb I_j} B_i\Big)$, $f$ factors through
the canonical projection 
$\prod_{i\in \mathbb I} B_i \to \prod_{i\in \mathbb I_{j_0}} B_i$ for some 
$j_0 < \kappa$, i.e. $\mathbb I_{j_0} \in \mathscr U$, as required.
\end{proof}

Note that in the case of groups and algebras over fields, the condition (i) 
of Theorem \ref{th-berg} with $\kappa = \omega$ is weaker than the finite 
subdirect irreducibility. This follows from \cite[Lemma 5]{bergman-g} in the 
case of algebras over fields, and in the case of groups the argument is repeated
almost verbatim. 

Note also that though Theorems \ref{th-rob-am} and \ref{th-berg} are, in a 
sense, dual to each other, this duality does not stretch to the proofs: the 
proofs are different, and each proof seemingly cannot be adapted to the dual
situation.

\subsection*{Acknowledgements}

Thanks are due to Alexander Gein, Fares Maalouf, and Eugene Plotkin for very 
useful remarks: Gein has dispelled my illusion about well-behavior of semigroup
congruences, Maalouf has turned my attention to a mistake in \cite{ultra}, and
Plotkin has insisted, a long time ago, that the setup of \cite{ultra} can be 
generalized further (it took me several years to realize this -- obvious, as it
looks now --  fact).
The financial support of the Regional Authority of the Moravian-Silesian Region
(grant MSK 44/3316) and of the Ministry of Education and Science of the 
Republic of Kazakhstan (grant 0828/GF4) is gratefully acknowledged.

\end{document}